\numberwithin{equation}{section}
\DeclareMathOperator*{\argmin}{arg\,min}
\newcommand*\diff{\mathop{}\!\mathrm{d}}
\newtheorem{theorem}{Theorem}[section]
\newtheorem{lemma}[theorem]{Lemma}
\theoremstyle{definition}
\newtheorem{remark}{Remark}[section]
\theoremstyle{plain}
\newtheorem{thm}{Theorem}[section]
\newtheorem{lem}[thm]{Lemma}
\theoremstyle{remark}
\newtheorem{rem}[thm]{Remark}
\theoremstyle{definition}
\newtheorem{defn}[thm]{Definition}
\theoremstyle{remark}
\newtheorem{assn}[thm]{Assumption}
\newcommand{\s}{\mathcal{S}}
\newcommand{\rinf}{\mathcal{I}}
\newcommand{\E}{\mathbb{E}}
\newcommand{\ts}{\tilde{s}}
\def\tV{\tilde V}
\def\eqn{\begin{equation}}
\def\endeqn{\end{equation}}
\def\t{t}
\def\ct{\s}
\def\T{T}
\def\ts{{\tilde s}}
\def\hs{\hat s}
\def\mf{{m_f}}
\def\tmf{{\tilde m_f}}
\def\half{\frac{1}{2}}
\def\d{{\delta}}
\def\D{\Delta}
\def\fd{p_\d}
\def\G{\Gamma_\d}
\def\g{\gamma_\d}
\def\V{\mathbb{V}}
\def\M{\mathcal{M}}
\def\A{\mathcal{A}}
\def\C{\mathcal{C}}
\def\B{\beta}
\def\psd{\Psi^*_\d}
\def\ps{\Psi^*}
\def\pd{\Psi_\d}
\def\defto{:=}
\def\Om{\Omega}
\def\om{\omega}
\def\F{\mathcal{F}}
\def\R{\mathbb{R}}
\def\P{\mathbb{P}}
\begin{document}
\title{Dynamic minimisation of the commute time for a one-dimensional diffusion}
\author{Ma. Elena Hern\'andez-Hern\'andez$^1$}


\author{Saul D Jacka$^2$}
\thanks{\\
$^{1}$ School of Mathematics, University of Leeds, Leeds, LS2 9JT, UK\\Email: m.e.hernandez-hernandez@leeds.ac.uk
\\
$^{2}$ Department of Statistics, University of Warwick, Coventry CV4 7AL, UK\\ Email: s.d.jacka@warwick.ac.uk }

\begin{abstract}
Motivated in part by a problem  in simulated tempering (a form of Markov chain Monte Carlo) we seek to minimise, in a suitable sense, the time it takes a (regular) diffusion with instantaneous reflection at 0 and 1  to travel to $1$ and then return to the origin (the so-called commute time from 0 to 1). 
Substantially extending results in a previous paper, we consider a dynamic version of this problem where the control mechanism is related to the diffusion\rq{}s drift via the corresponding scale function.  We are only able to choose the drift at each point at the time of first visiting that point and the drift is constrained on a set of the form $[0,\ell)\cup(i,1]$. This leads to a type of stochastic control problem with infinite dimensional state.

\end{abstract}
\thanks{{\bf Key words: COMMUTE-TIME; DIFFUSION; INFINITE-DIMENSIONAL CONTROL PROBLEM; STOCHASTIC CONTROL; SIMULATED TEMPERING}}

\thanks{{\bf AMS 2010 subject classifications:} Primary 60J25; secondary 60J27, 60J60, 93E20}
\thanks{The authors are most grateful to Gareth Roberts for suggesting this problem}

\maketitle \centerline{{\today}}


\maketitle

\section{Introduction}
Suppose that $X^\mu$ is a diffusion on $[0,1]$,  started at 0 and given by
\begin{equation}\label{SDE}
 X^\mu_t=x+\int_0^t \sigma(X^\mu_u) \diff B_u+\int_0^t \mu(X^\mu_u) \diff u\,\,\,\text{ on $\,$ (0,1)}
\end{equation}
with instantaneous reflection at 0 and 1 (see \cite{R+W} or \cite{IM} for details).

Define $\T_x$ to be the first time that the diffusion
reaches $x$, then define $\ct=\ct(X^\mu)$, the commute time (between 0 and 1),
by
$$
\ct(X^\mu)\defto \inf\left \{ t>\T_1(X^\mu):\: X^\mu_t=0 \right \}.
$$

In \cite{JH}, motivated by a question arising in simulated tempering (see \cite{ARR}), we considered the following problem (and several variants and generalisations):
\prob\label{prob1} Minimise the expected commute time $\E [\, \ct\,]$; i.e. find
$$
\inf_\mu\E_x [\ct(X^\mu)],
$$
or, more generally,
find, for suitable positive functions $f$,
\begin{equation}\label{value}
\Phi(x)\defto\inf_\mu[\Phi(\mu,x)]\text{ where } \Phi(\mu,x)\defto\E_x[ \int_0^\ct f(X^\mu_t) \diff t],
\end{equation}
and where the drift at each level must be chosen at or before $X^\mu$'s first visit to that level and thereafter remains fixed.
\endprob
The commute time is defined  for random walks on graphs in \cite{Barlow}. The original commute time identity (a version of which we give later in (\ref{ctidentity})), was only discovered in 1989 and first appeared in \cite{CRRST}.
We gave the optimal drift to minimise the quantity in (\ref{value}) in Theorem 4.6 of \cite{JH}, under the assumption that $\mu$ was already fixed on some interval $[0,y)$, the drift is otherwise unconstrained and the starting state is in $[0,y)$. We left open the question of the optimal control when $\mu$ is, initially, fixed on some other interval.
The key observation in \cite{JH} was that we can follow the same solution as for the static case -- where we choose the drift function at time 0 -- because \lq there can be no surprises' (in the path of $X$). This statement is no longer valid when the set on which the drift is constrained is not of the form $[0,y)$ and we gave,  as an example in Remark 4.1 of \cite{JH}, a heuristic argument for why a different solution would be optimal in the case where the constraint set was of the form $(i,1]$.

Our aim, in the current paper, is to present the solution (in Theorem  \ref{result}) to the dynamic problem in this case, where the \lq surprises' are how far down the controlled process gets before time $\T_1$.

The structure of the paper is as follows: in section 2 we give a formal definition of the model; section 3 is devoted to calculating the candidate value function; section 4 gives the proof that this is correct and we then give some concluding remarks.
\section{The model and some notation}
\subsection{The model}
Let $X_{t}^{\mu,i_0}$ be a regular diffusion on $[0,1]$ with instantaneous reflection at 0 and 1, starting at $i_0\in[0,1]$, and given by
\[
X_{t}^{\mu,i_0} =i_0+\int_0^t \sigma(X_{t}^{\mu,i_0})\diff B_{t}  + \mu (X_{t}^{\mu,i_0})\diff s.
\]


We need to define the set of admissible controls quite carefully. Two approaches are possible: the first is to restrict controls to choosing the drift $\mu$  whilst the second is to control the corresponding random scale function.We adopt the second approach, although we should caveat that the identified optimal policy will not, in general, be in this class (or, equivalently, the relevant infimum will not be attained by any policy in this class).

We assume the usual Markovian setup, so that each stochastic process lives on a suitable filtered space $(\Om,\F,(\F_t)_{t\geq 0})$, with the usual family of probability measures $(\P_x)_{x\in[0,1]}$ corresponding to the possible initial values.

Let $X^{\mu}$ be the diffusion with instantaneous reflection as given in \eqref{SDE}. Denote by $s^\mu$ the standardised \textit{scale measure} of   $X^\mu$  and by $m^\mu$ the corresponding \textit{speed measure}. 
\begin{remark}
Since $X^\mu$ is regular and reflection is instantaneous we have:
$$
s^\mu(x)\defto s^\mu[0,x] = \int_0^x\exp\biggl( {-2\int_0^u\frac{\mu(t)}{\sigma^2(t)} \diff t}\biggr) \diff u,
$$
$$
m^\mu([0,x])\defto m^\mu(x)=2\int_0^x\frac{ \diff u}{\sigma^2(u)s\rq{}(u)}=2\int_0^x\frac{\exp \bigl({2\int_0^u\frac{\mu(t)}{\sigma^2(t)} \diff t}\bigr)}{\sigma^2(u)} \diff u,
$$
(see \cite{RY}).
\end{remark}
From now on, we  consider the more general case where we only know that 
$s$ and $m$ are absolutely continuous with respect to Lebesgue measure (denoted by $\lambda$) so that, denoting the respective Radon-Nikodym derivatives by $s\rq{}$ and $m\rq{}$ we have
$$
s\rq{}m\rq{}= \frac{2}{\sigma^2}\,\,\text{ $\lambda$-a.e.}
$$
For such a pair we shall denote the corresponding diffusion, when it exists, by $X^s$. We emphasize that we are only considering regular diffusions with Brownian \lq\lq{}martingale  part\rq\rq{} $\int\sigma \diff B$ or, more precisely, diffusions $X$ with scale functions $s$ such that
\begin{equation}\label{model}
 \diff s(X_t)=s'(X_t)\sigma(X_t) \diff B_t,
\end{equation}
for some Brownian Motion $B$, 
so that, for example, sticky points are excluded (see \cite{E} for a description of driftless sticky Brownian Motion and its construction, see also \cite{E2} for other problems arising in solving stochastic differential equations ) as are singular scale measures. 
\begin{remark}
Note that our assumptions do allow generalised drift: if $s$ is the difference between two convex functions (which we will not necessarily assume) then 
\eqn\label{gend}
X^s_t=x+\int_0^t\sigma(X^s_u) \diff B_u-\half\int_{\R}L^a_t(X)\frac{s\rq{}\rq{}( \diff a)}{s\rq{}_-(a)},
\endeqn
where $s\rq{}_-$ denotes the left-hand derivative of $s$,  $s\rq{}\rq{}$ denotes the signed measure induced by $s\rq{}_-$ and $L^a_t(X)$ denotes the local time at $a$ developed by time $t$ by $X$ (see \cite{RY} Chapter VI for details).
\end{remark}
\begin{remark}
Essentially, we treat (\ref{model}) as the canonical dynamics for our problem, but note that we shall consider random scale measures for which $s(0)$ is not known at time 0. 
\end{remark}
\subsection{The control setting}
As is usual, we will adopt a weak approach to the control problem so that we work on canonical pathspace $\Om=D_{[0,\infty)}[0,1]$, the space of paths in $[0,1]$ which are right-continuous with left limits indexed by $[0,\infty)$ equipped with the Borel $\sigma$-algebra on $\Om$ with respect to the Skorokhod metric and natural filtration $(\F_t)_{t\ge 0}$  with
$$
X:\om\mapsto\om
$$
(see \cite{EK} for details).

Let $\mathcal{M}$ be the set of scale functions/measures on $[0,1]$ that are absolutely continuous with respect to Lebesgue measure.  Given a fixed scale function $s_0 \in \mathcal{M}$  and a Borel subset $F$  of $[0,1]$, define the set $\mathcal{M}_{F}^{s_0}$ as follows
\begin{align}
\M_{F}^{s_{0}} \defto  \left \{ s  \in \mathcal{M}\; a.s. : \,\,\diff s \Big |_F = \diff s_0  \Big |_F\; \right \}.
\end{align}
Then define the set of available controls
$$
\C_{F}^{s_{0}} \defto  \left \{\text{random measures } s  \in \mathcal{M}\; a.s. : \,\,\diff s \Big |_F = \diff s_0  \Big |_F,\; s'(X_{\cdot})\text{ is predictable}\right \}.
$$
Now define the admissible control policies $\A_{F}^{x,s_{0}}$ to be the set of $s\in\C_{F}^{s_{0}}$ such that the corresponding controlled process starting at $x \in [0,1]$ with random scale function $s$ exists; 
in other words, 
\begin{IEEEeqnarray}{r,l}\label{adm}
\A_{F}^{x,s_{0}}=&\{s\in\C_{F}^{s_{0}}\text{ s.t.  there exists a p.m. }\P_{s,x}:\:\: 
\text{ with }X_0=x\;\P_{s,x}-a.s.\text{ and }\\
&\phantom{xxx}ds(X_t,\om)=s'(X_t,\om)\sigma(X_t)dB_t,
\text{ for some }\P_{s,x}-\text{Brownian Motion},\; B\},
\end{IEEEeqnarray}
We denote the expectation corresponding to such a p.m. $\P_{s,x}$ by $\E_{s,x}$.

Recall that $\T_y$ is the first hitting time at level $y$ of the process $X$, that is
\[ \T_y := \inf \{ t > 0\,:\, X= y\},\]
and $\s$ denotes the first time the controlled process  reaches $0$ after having hit level $1$, that is 
\begin{equation}
\s: = \inf \{ t > \T_1 \,:\, X_{t} = 0\}.
\end{equation}

Define the running infimum of $X$ by setting
\[ \rinf_{t} \,\, :=\,\,\inf_{0\le r \le t} X_{r}.\]

We are able to choose $s'$ (the derivative of the scale function $s$)  dynamically, but {\em only once for each level}, and we seek to minimise. 
\[ \mathbb{E} \left [ \int_{0}^{\s} f (X_{t}) \diff t\right ]\]
(See  \cite[Remark 4.1]{JH}).

More precisely, we will assume the following:
\begin{assn}\label{A1}For a given level $\ell$ and a starting point $i$, with $\ell < i$,  suppose that $s'$ has been fixed at every level on $C:=[0,\ell] \cup [i,1]$.
\end{assn}

For a given positive cost function $f$, the control problem consists in finding
\begin{equation}
V(i)=\V(s_0,C,i)\defto \inf_{s\in \A_{C}^{i,s_0}} \mathbb{E}_{s,i} \int_0^{\s} f(X_{t})\diff t.
\end{equation}

\subsection{Heuristic for the optimal strategy.} 
Strategy $s$ has been fixed on $C_0:=[0,\ell)\cup (i_0,1]$, and we need to determine how to proceed on $[\ell,i_0]$. Since we are only allowed to choose $s'$ once for each level, we choose a strategy at $y\in (\ell, i_0)$ before time $\T_1$ only if such a level is reached from above before hitting $1$. Conversely, if $\rinf_{\T_1}>y$, then we need only choose the drift at level $y$ after the process has hit level 1. Consequently we are not constrained to enable the process to hit level 1 (again) so may choose an arbitrarily large downward drift at any such level.

Our optimal control should respect this and we proceed to calculate the optimal control in this class.
\section{The candidate optimal payoff}
\subsection{Initial calculations}
The commute time identity:
\begin{equation}\label{ctidentity}
\E_0\s=s[0,1]m[0,1]
\end{equation}
is generalised as follows.
For any pair $x,y \in [0,1]$, and any $s\in\M$, define the function
\[\phi^s(x,y)=\phi (x,y) := \E_{s,x} \left [ \int_{0}^{\T_{y}} f(X_{t}) \diff t\right ]\]
It follows (by \cite[Theorem 2.4]{JH}) that,
defining the measure $m_f$ by
$$
m_f'=fm',
$$
\begin{equation}\label{phi}
\phi(x,y) = \left \{
\begin{array}{cc}
\int_{x}^{y} \diff v s'(v) \int_{0}^{v} \frac{2f(u)}{\sigma^{2}(u) s'(u)} \diff u=\int_{x}^{y} \diff v s'(v) m_f(v), & x \le y \\
&\\
\int_{y}^{x} \diff v s'(v) \int_{v}^{1} \frac{2f(u)}{\sigma^{2}(u) s'(u)} \diff u=\int_{y}^{x} \diff v s'(v)\tmf(v), & x \ge y,
\end{array}
\right.
\end{equation}
and
$$\E_{s,0}\int_0^\s f(X_t)dt=\phi(0,1)+\phi(1,0)=s[0,1]m_f[0,1].
$$

Now we suppose that $s$ is an arbitrary  (deterministic) scale function which is assumed to equal $s_0$ on the intervals $[0,\ell]$ and $[i,1]$ (for some $\ell \in [0,1]$ fixed in advance) and which will be dynamically reset to give infinite downward drift on the interval $[\ell,I_{\T_1}\vee\ell)$ at time $\T_1$. We denote the corresponding (random) scale function by $s^*$.

Define 
$$
\rho:z\mapsto  \frac{2f(z)}{\sigma^2(z)}.
$$
We make the following standing assumption whose relevance follows:
\begin{assn}\label{cond}
$$
\sqrt{\rho}\in L^1[0,1].
$$
\end{assn}
\begin{rem}
Suppose that $s$ is a scale measure,
then the Cauchy-Schwarz inequality tells us that
$$
\int_0^1\rho(z)dz=\int_0^1\frac{2f(z)}{\sigma^2(z)}dz\leq \biggl(\int_0^1 s'(z)dz\biggr)^\half\biggl(\int_0^1 \frac{2f(z)}{\sigma^2(z)s'(z)}dz\biggr)^\half=\sqrt{\Phi(s,0)},
$$
so that Assumption \ref{cond} is a necessary condition for the existence of a scale function with finite payoff.

\end{rem}

We compute the corresponding payoff of a controlled process $\{X_{t}\}_{t\ge 0}$, where $i$ is the starting point.

Set  $s:x\mapsto s[0,x]$, $m_f:x\mapsto m_f[0,x]$, $\ts:x\mapsto s[x,1]$, and $\tmf:x\mapsto m_f[x,1]$

\begin{defn}Fix $s\in\M_{C}^{s_0}$ and define
\begin{IEEEeqnarray}{rl}\label{param}
\kappa &= \int_0^\ell s'(v)m_f [v,\ell]\diff v=\int_0^l s'(v)\tmf (v)\diff v-s(\ell)\tmf(\ell)\nonumber\\
a &= \tilde{m}_f (i) \nonumber\\
b &= \tilde{s} (i) \nonumber\\
c &=m_f(\ell)\nonumber\\
\t &= \tilde{s} (\ell) \nonumber\\
k &= s (\ell),
\end{IEEEeqnarray}
and note that, since $s'$ is fixed on $C_0:=C=[0,\ell)\cup (i,1]$, $t$ is the only one of the parameters in (\ref{param}) which is not determined by $s'$ restricted to $C_0$.
\end{defn}
To complete the (infinite-dimensional) state of the problem, we define
$$
C_t\defto C_0\cup(\rinf_{t\wedge\T_1},i]=[0,\ell)\cup(\rinf_{t\wedge\T_1},1].
$$

\begin{lem}
Let $s\in \M$, then the  payoff of the policy $s^*$ is given by
\begin{equation}\label{vstar}
V(s^*,C_0,i)=V^{s^*}(i)\defto \left \{ \kappa + bc + ab H(i) + \frac{1}{e^{H(i)}} \int_l^i\frac{\rho H e^H }{H'} \diff z \right \}
\end{equation}
where $H$ is defined by
\begin{equation*}
H:z\mapsto  1 + \frac{\t}{c} + \ln \frac{c}{\tilde{s}(z)}\text{ and, as stated earlier, }
\rho:z\mapsto  \frac{2f(z)}{\sigma^2(z)}.
\end{equation*}
\end{lem}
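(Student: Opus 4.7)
The natural approach is to split $V^{s^*}(i)$ at the hitting time $\T_1$. The first piece $\E_i\int_0^{\T_1} f\diff t$ equals $\phi^s(i,1)$ by (\ref{phi}), and the second piece is handled by conditioning on the running infimum $I_{\T_1}=\rinf_{\T_1}$. On the event $\{I_{\T_1}\le\ell\}$ the reset interval $[\ell, I_{\T_1}\vee\ell)$ is empty, so the process returns to $0$ under the unaltered $s$ and contributes $\phi^s(1,0)$; on $\{I_{\T_1} > \ell\}$ the infinite-downward-drift reset produces, in the limiting sense, an absorbing barrier at $I_{\T_1}$ for the diffusion on $[I_{\T_1},1]$ together with a reflecting barrier at $\ell$ for an independent sub-diffusion on $[0,\ell]$, so the contribution is $\phi^s(1,I_{\T_1}) + \kappa$, where $\kappa$ is precisely the $f$-cost of this reflecting sub-problem. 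Identifying these two sub-problems cleanly is the main subtlety: a naive $M\to\infty$ application of Feller's formula to the full-space scale and speed measures can produce spurious infinities if one forgets that introducing strong drift on $(\ell, I_{\T_1})$ automatically alters $s^{*\prime}$ on $[I_{\T_1},1]$ by scale-function continuity.

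To average over $I_{\T_1}$ I use the classical identity $\P_i(\T_y < \T_1) = b/\tilde{s}(y)$ for $y\in(\ell,i]$, which is valid because the pre-$\T_1$ evolution is governed by $s$. Fubini turns $\E[\phi^s(1, I_{\T_1}\vee\ell)]$ into $b\int_\ell^i s'(v)\tmf(v)/\tilde{s}(v)\diff v + \phi^s(1,i)$. Combining all the pieces and using the elementary identities $\phi^s(i,1) + \phi^s(1,i) = b\cdot m_f[0,1] = b(c + \tmf(\ell))$ and $\phi^s(\ell,0) = \kappa + k\tmf(\ell)$, I obtain $V^{s^*}(i)$ as the sum of $\kappa$, $bc$, a linear multiple of $\tmf(\ell)$, and $b\int_\ell^i s'(v)\tmf(v)/\tilde{s}(v)\diff v$.

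The final step is integration by parts on this last integral, using $H'(v) = s'(v)/\tilde{s}(v)$ (immediate from the stated form of $H$) and $\tmf'(v) = -\rho(v)/s'(v)$ (from $m_f' = fm' = \rho/s'$). The IBP produces boundary terms $abH(i) - bH(\ell)\tmf(\ell)$ together with the bulk term $b\int_\ell^i H(v)\rho(v)/s'(v)\diff v$. The additive constant built into $H$ is precisely what makes $bH(\ell)\tmf(\ell)$ cancel the linear-$\tmf(\ell)$ contribution from the previous paragraph, leaving $\kappa + bc + abH(i) + b\int_\ell^i H\rho/s'\diff z$. A short algebraic check using $e^{H(z)} = e^{1+t/c}\,c/\tilde{s}(z)$ and $H'(z) = s'(z)/\tilde{s}(z)$ identifies $b\rho H/s'$ with $e^{-H(i)}\,\rho H e^H/H'$, rewriting the integral in the form stated in (\ref{vstar}).
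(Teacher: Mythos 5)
Your argument is correct and follows essentially the same route as the paper's own proof: split the cost at $\T_1$; condition on the running infimum using $\P_i(I_{\T_1}\le y)=\ts(i)/\ts(y)$; handle $\E[\phi^s(1,I_{\T_1}\vee\ell)]$ by Fubini (equivalently, the paper's integration by parts against the CDF $F$); and then integrate by parts a second time using $H'=s'/\ts$ and $\tmf'=-m_f'$. One normalisation point worth recording: the boundary-term cancellation you invoke requires $H(\ell)=1+k/\t$, so $H$ must be $z\mapsto 1+\tfrac{k}{\t}+\ln\tfrac{\t}{\ts(z)}$ rather than the $1+\tfrac{\t}{c}+\ln\tfrac{c}{\ts(z)}$ printed in the lemma statement (evidently a typo, as the subsequent lemma's expression $H(i)=1+\tfrac{k}{\t}+\ln\tfrac{\t}{b}$ confirms); your proof implicitly and correctly uses the former.
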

\begin{proof}
First note that $s^*=s$ on the event $(\rinf_{\T_1}\leq \ell)$, whereas, on $(\rinf_{\T_1}>\ell)$,  $s^{*'}=s'$ on $(\rinf_{\T_1},1]$ and when $X^{s^*}$ reaches $\rinf_{\T_1}$ for the first time after time $\T_1$, the effect of infinite downwards drift is that $X$ is instantaneously translated to level $\ell$, and thereafter a reflecting (downwards) barrier is imposed at level $\ell$.

It follows that
\begin{equation}\label{cost1}
V^{s^*}(i)=\phi^s(i,1)+\E_i[\phi^s(1,0)1_{(\rinf_{\T_1}\leq \ell)}+\bigl(\phi^s(1,\rinf_{\T_1})+\phi^s_{\ell}(\ell,0)\bigr)1_{(\rinf_{\T_1}> \ell)}],
\end{equation}
where $\phi^s_{\ell}(x,y):=\mathbb{E} \left [ \int_{0}^{\T_{y}} f(X_{t}^{x,s}) 1_{(X_{t}^{x,s}\in[0,\ell])}\diff t\right ]=\mathbb{E} \left [ \int_{0}^{\T_{y}} f(X_{t}^{(l),x,s})\diff t\right ]$
and $X^{(l),x}$ is the controlled process started at $x$ and with a reflecting barrier at $\ell$.

It is easy to see that
$$
\phi^s_{\ell}(x,y) = \int_{y}^{x} \diff v s'(v) \int_{v}^{\ell} \frac{2f(u)}{\sigma^{2}(u) s'(u)} \diff u=\int_{x}^{y} \diff v s'(v) m_f(v,\ell), \text{ for } y \le x \le \ell,
$$
so that $\phi^s_{\ell}(\ell,0)=\kappa$, while, under the control $s$, with $X$ starting at $i$, the distribution function of $\rinf_{\T_1}$ is 
\begin{equation}\label{dist}
F:x\mapsto \begin{cases}
1&\text{for }x>i\\
\frac{\ts(i)}{\ts(x)}&\text{for }i\geq x\geq 0\\
0&\text{for }x<0.
\end{cases}
\end{equation}
Now equation (\ref{cost1}) implies that
\[
V^{s^*}(i)=\phi^s(i,1)+\phi^s(1,0)F(\ell)+[1-F(\ell)]\kappa+\int_{\ell}^i\phi^s(1,x)dF(x),
\]
which becomes (on integrating by parts and recalling (\ref{phi} and (\ref{dist}))
\begin{IEEEeqnarray*}{rl}\label{cost2}
V^{s^*}(i)&=\int_i^1 s'(v)m_f(v)\diff v+\frac{\ts(i)}{\ts(\ell)}\int_0^1 s'(v)\tmf(v)\diff v+\kappa\biggl(1-\frac{\ts(i)}{\ts(\ell)}\biggr)\\
&\phantom{space}+\int_{\ell}^i \frac{\ts(i)}{\ts(x)}s'(x)\tmf(x)dx+F(i)\phi^s(1,i)-F(\ell)\phi^s(1,\ell)\\
&=\int_i^1 s'(v)m_f(v)\diff v+\kappa+
\int_{\ell}^i \frac{\ts(i)}{\ts(x)}s'(x)\tmf(x)dx\\
&\phantom{space}+\int_i^1 s'(v)\tmf(v)\diff v-\frac{\ts(i)}{\ts(\ell)}(\int_{\ell}^1 s'(v)\tmf(v)\diff v+\kappa-\int_0^1 s'(v)\tmf(v)\diff v)\\
&=\int_i^1 s'(v)m_f(v)\diff v+\kappa+
\int_{\ell}^i \frac{\ts(i)}{\ts(x)}s'(x)\tmf(x)dx+\int_i^1 s'(v)\tmf(v)\diff v+\frac{\ts(i)}{\ts(\ell)}s(\ell)\tmf(\ell)\\
&=\ts(i)m_f([0,1])+\kappa+\int_i^1 s'(v)\tmf(v)\diff v+\frac{\ts(i)}{\ts(\ell)}s(\ell)\tmf(\ell)\\
&=\ts(i)m_f(\ell)+\kappa+\ts(i)\biggl[(1+\frac{s(\ell)}{\ts(\ell)})\tmf(\ell)+\int_\ell^i\frac{\tmf(y)}{\ts(y)}s'(y)\diff y   \biggr]\\
&=\ts(i)m_f(\ell)+\kappa+\ts(i)\biggl[(1+\frac{s(\ell)}{\ts(\ell)})\tmf(\ell)+\int_\ell^i \tmf(y)H'(y)\diff y   \biggr]\\
&=\ts(i)m_f(\ell)+\kappa+\ts(i)\biggl[H(i)\tmf(i)+\int_\ell^i m_f'(y)H(y)\diff y\biggr]\\
&=\kappa+bc+abH(i)+b\int_\ell^i m_f'(y)H(y)\diff y.\\
\end{IEEEeqnarray*}
Now $m_f'=\frac{\rho}{s'}$ so, using the equalities
\begin{align*}
H'(z) = \frac{s'(z)}{\tilde{s}(z)}, \text{ and } e^H = \frac{c}{\tilde{s}(z)} e^{1+ \t/c},
\end{align*}
we obtain
$$
V^{s^*}(i)=\kappa+bc+abH(i)+ \frac{1}{e^{H(i)}} \int_l^i\frac{\rho H e^H }{H'} \diff z, 
$$
as required.
\end{proof}

\subsection{A calculus of variations approach}
We wish to find
\begin{equation}\label{prob1}
V(i):=\inf_{s\in\M_{C_0}^{s_0}}V^{s^*}(i).
\end{equation}
since our candidate optimal control lies in this class.
To do this, we first treat $H(\ell)$ and $H(i)$ as fixed parameters and then optimise over suitable values for these parameters.
\begin{lem}
\[V(i) := \kappa + bc + \inf_{\t\ge b} \left \{  ab \left (1+ \frac{k}{\t} + \ln  \frac{\t}{b}\right) + \frac{\B^2(i)}{e^{1+ \frac{k}{\t} + \ln  \frac{\t}{b}} \bigl(\phi(1+\frac{k}{\t})-\phi(1+ \frac{k}{\t} + \ln  \frac{\t}{b})\bigr)}\right \}
\]
where
\[\phi:x\mapsto  \int_{x}^{\infty} \frac{\diff u}{ue^u},\quad\quad\B (i) = \int_{\ell}^i \sqrt{\rho(v)}\diff v.\]
\end{lem}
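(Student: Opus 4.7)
The starting point is the identity
\[
V^{s^*}(i)=\kappa+bc+abH(i)+\frac{1}{e^{H(i)}}\int_\ell^i\frac{\rho H e^H}{H'}\diff z
\]
from the previous lemma, together with the observation that $\kappa,a,b,c,k$ are fixed by $s_0|_{C_0}$, whereas the free data $s'|_{(\ell,i)}$ determines $H$ via $H'=s'/\ts$ and the normalisation $H(\ell)=1+k/\t$, whence $H(i)=H(\ell)+\ln(\t/b)=1+k/\t+\ln(\t/b)$. Both $H(\ell)$ and $H(i)$ therefore depend only on the scalar $\t=\ts(\ell)$, which ranges over $[b,\infty)$ since $s[\ell,i]=\t-b\ge 0$; the internal shape of $s'$ on $(\ell,i)$ enters $V^{s^*}$ solely through the integral. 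The plan is to bound the integral by Cauchy--Schwarz with $\t$ held fixed, and then minimise over $\t\ge b$.

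For the Cauchy--Schwarz step I write $\sqrt{\rho(z)}=\sqrt{\rho He^H/H'}\cdot\sqrt{H'/(He^H)}$ to obtain
\[
\B^2(i)=\biggl(\int_\ell^i \sqrt{\rho}\,\diff z\biggr)^{\!2}\le \int_\ell^i \frac{\rho He^H}{H'}\diff z\cdot\int_\ell^i \frac{H'}{He^H}\diff z.
\]
The substitution $u=H(z)$ converts the right-hand factor into $\int_{H(\ell)}^{H(i)}\diff u/(ue^u)=\phi(H(\ell))-\phi(H(i))$, and rearranging gives
\[
\int_\ell^i\frac{\rho He^H}{H'}\diff z\ge\frac{\B^2(i)}{\phi(H(\ell))-\phi(H(i))}.
\]
Substituting $H(\ell)=1+k/\t$ and $H(i)=1+k/\t+\ln(\t/b)$ into $V^{s^*}$ and taking $\inf_{\t\ge b}$ yields the right-hand side of the lemma as a lower bound for $V(i)$.

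For the matching upper bound, equality in Cauchy--Schwarz holds exactly when $H'(z)\propto H(z)e^{H(z)}\sqrt{\rho(z)}$; this first-order ODE, with $H(\ell)$ prescribed, has a unique increasing solution on $(\ell,i)$ for each proportionality constant $\lambda>0$, and $H(i)$ runs continuously through $(H(\ell),\infty)$ as $\lambda$ sweeps $(0,\infty)$. Hence for every $\t\in(b,\infty)$ there is a unique $\lambda=\lambda(\t)$ achieving the required value of $H(i)$, and the corresponding $s'(z)=H'(z)\ts(z)$ is strictly positive and absolutely continuous on $(\ell,i)$. The main obstacle is then the existence/admissibility issue flagged at the start of Section 3: this candidate need not itself lie in $\A_{C_0}^{i,s_0}$ and in general has to be approached by a sequence of admissible controls. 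Granted this approximation, the Cauchy--Schwarz lower bound is attained in the infimum, and the outer minimisation over $\t\ge b$ completes the proof.
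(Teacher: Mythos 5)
Your proof is correct but takes a genuinely different route from the paper's. The paper derives the Euler--Lagrange equation for the variational integrand $\rho H e^H/H'$, integrates it twice, and imposes the boundary condition at $i$ to obtain the constant $D=\bigl(\phi(H(\ell))-\phi(H(i))\bigr)/\B(i)$. You instead prove the lower bound directly by Cauchy--Schwarz with the factorisation $\sqrt{\rho}=\sqrt{\rho He^H/H'}\cdot\sqrt{H'/(He^H)}$, evaluate the companion integral $\int_\ell^i H'/(He^H)\,\diff z=\phi(H(\ell))-\phi(H(i))$ by the substitution $u=H(z)$, and then exhibit the extremiser by solving the equality-case ODE. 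The two routes lead to the same first-order equation $H'/(He^H)=D\sqrt{\rho}$, but the Cauchy--Schwarz route is cleaner: it yields the inequality and the extremiser in one step and sidesteps the (unaddressed in the paper) question of whether an Euler--Lagrange critical point is a global minimiser. You have also correctly identified $H(\ell)=1+k/\t$ and $H(i)=1+k/\t+\ln(\t/b)$, which is what the paper's subsequent computation actually requires; the displayed definition $H:z\mapsto 1+\t/c+\ln\bigl(c/\ts(z)\bigr)$ in the earlier lemma appears to contain a transcription error (it should read $1+k/\t+\ln\bigl(\t/\ts(z)\bigr)$). One small quibble: the admissibility caveat you raise at the end is not really in play here. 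At this stage the infimum in $(3.12)$ is over deterministic scale functions $s\in\M_{C_0}^{s_0}$, and the equality-case $s'=H'\ts$ you construct is a perfectly good such function (strictly positive and absolutely continuous). The genuine admissibility concern, namely whether the corresponding controlled process exists in $\A_{C_0}^{i,s_0}$, is handled separately in the proof of Theorem \ref{result}, not in this lemma, and the boundary case $\t=b$ is covered simply by continuity of the objective in $\t$.
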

\begin{proof}
Fixing $H(\ell)$ and $H(i)$, the Euler-Lagrange equation for the minimisation of $\int_{\ell}^i\frac{\rho H e^H }{H'} \diff z $ yields
$$
(2\rho H'(1+H)+\rho' H)H'-2H''\rho H=0.
$$
Dividing by $2\rho HH'$ gives
\begin{equation}\label{EL1}
H'(1+\frac{1}{H})=\frac{H''}{H'}-\half\frac{\rho'}{\rho}.
\end{equation}
Integrating (\ref{EL1}) yields
$$
H+\ln H=\ln H'-\half\ln\rho-\ln D,
$$
or
\begin{equation}\label{EL2}
\frac{H'}{He^H}={D\sqrt{\rho}},
\end{equation}
for some constant of integration $D$.
Integrating (\ref{EL2}) from $\ell$ to $y$ yields
\begin{equation}\label{EL3}
D\B(y)=\int_{H(\ell)}^{H(y)}\frac{du}{ue^u}=\phi(H(\ell))-\phi(H(y)),
\end{equation}
and applying the boundary condition at $i$ yields
\begin{equation}\label{EL3}
D=\frac{\phi(H(\ell))-\phi(H(i))}{\B(i)}.
\end{equation}

Substituting (\ref{EL2}) into (\ref{vstar}) gives 
\begin{IEEEeqnarray*}{rl}
V(i)&=\kappa + bc + \inf_{H(i)}\biggl(ab H(i) + \frac{\B(i) }{De^{H(i)}}\biggr)\\
&=\kappa+bc+\inf_{H(i)}\biggl(abH(i)+\frac{\B^2(i) }{[\phi(H(\ell))-\phi(H(i))]e^{H(i)}}\biggr)\\
&=\kappa + bc + \inf_{\t\geq b}\biggl(  ab \left (1+ \frac{k}{\t} + \ln  \frac{\t}{b}\right) + \frac{\B^2(i)}{e^{1+ \frac{k}{\t} + \ln  \frac{\t}{b}} \bigl[\phi(1+\frac{k}{\t})-\phi(1+ \frac{k}{\t} + \ln  \frac{\t}{b})\bigr]}\biggr)
\end{IEEEeqnarray*}
as required.
\end{proof}

\subsection{Further optimisation}
Define 
$$\delta=\delta(i)=\frac{k}{b}=\frac{s(\ell)}{\ts(i)}$$
and $$\fd:y\mapsto 1+\frac{\d}{y}+\ln y,
$$
Note that $\argmin(\fd)=\d$ and, defining $\G=\max(\d,1)$, $\fd$ is decreasing on $[1,\G]$ and increasing on $[\G,\infty)$, with infimum $\g=\min(1+\d,2+\ln\d)$. 
Define $\fd^{-1}:[\g,\infty)\rightarrow [\G,\infty)$ then
the change of variable $ z = \fd(\frac{\t}{b}) := 1+\frac{k}{\t} + \ln (\frac{\t}{b})$ yields
\[V(i) := \kappa + \tilde{s}(i) m_f(l) + ab + \inf_{z\geq \g} \left \{  ab z + \frac{\beta^2 (l,i)e^{-z}}{\phi(z-\ln f^{-1}(z))-\phi(z)}\right \}.
\]

Define 
\begin{equation}\label{D:Psi}
 \Psi_\d (z) : = \frac{e^{-z}}{\phi(z-\ln \fd^{-1}(z))-\phi(z)}.
 \end{equation}
 For now, we shall omit the dependence of $\pd$ on $\d$.
 
\begin{lemma} $\Psi$ is a positive, decreasing convex function on $(\g,\infty)$. 
\end{lemma}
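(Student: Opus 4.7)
The plan is to reparametrise by $\tau = \fd^{-1}(z) \in [\G, \infty)$. Since $z = \fd(\tau) = 1 + \d/\tau + \ln\tau$, setting $\eta := z - \ln\tau = 1 + \d/\tau$ recasts the reciprocal of $\pd$ in the compact form
\[
g(z) := 1/\pd(z) \;=\; e^{z}\bigl[\phi(\eta) - \phi(z)\bigr] \;=\; e^z \int_\eta^z \frac{e^{-y}}{y}\, dy.
\]
On $(\g, \infty)$ one has $\tau > \G \ge \max(\d, 1)$, so simultaneously $z > \eta$ (giving $g(z) > 0$, and hence positivity of $\pd$) and $\eta(z) = 1 + \d/\tau \in (1, 2)$ (since $\tau > \d$), with $\eta \to 1^+$ as $z \to \infty$.

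For monotonicity, implicit differentiation of the relation $\eta = 1 + \d\, e^{\eta - z}$ gives $\eta'(z) = (\eta - 1)/(\eta - 2) < 0$, and combined with the identity $e^{z - \eta} = \tau = \d/(\eta - 1)$ Leibniz's rule yields
\[
g'(z) \;=\; g(z) + \frac{1}{z} - \frac{e^{z-\eta}}{\eta}\,\eta'(z) \;=\; g(z) + \frac{1}{z} + \frac{\d}{\eta(2-\eta)},
\]
a sum of three strictly positive quantities on $(\g, \infty)$. Hence $g' > 0$, so $\pd = 1/g$ is strictly decreasing.

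For convexity, a second differentiation and a routine simplification produce
\[
g''(z) \;=\; g'(z) - \frac{1}{z^2} - \frac{2\d(\eta - 1)^2}{\eta^2(2 - \eta)^3}.
\]
Setting $A := 1/z$, $B := \d/[\eta(2-\eta)]$ and $C := 2\d(\eta-1)^2/[\eta^2(2-\eta)^3]$ (all strictly positive on $(\g, \infty)$), one has $g' = g + A + B$ and $g'' = g' - A^2 - C$, whence
\[
2(g')^2 - g\,g'' \;=\; g'(2g' - g) + g(A^2 + C) \;=\; (g + A + B)(g + 2A + 2B) + g(A^2 + C) \;>\; 0,
\]
and therefore $\pd''(z) = [2(g')^2 - g\,g'']/g^3 > 0$, establishing strict convexity. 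The one potential pitfall is choosing the correct branch of $\fd^{-1}$ in the case $\d > 1$, where $\fd$ is not monotone on $[1, \infty)$; the convention $\fd^{-1}:[\g,\infty)\to[\G,\infty)$ places us on the increasing branch with $\tau \ge \d$, which is exactly what keeps $\eta < 2$ (and hence $B, C > 0$) on the open interval. Beyond this, everything reduces to mechanical chain-rule bookkeeping, with convexity handed to us by the fortunate identity $g'' = g' - A^2 - C$ together with the obvious positivity of $2g' - g$.
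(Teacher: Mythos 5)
Your proof is correct and is, in substance, the paper's proof recast in terms of the reciprocal $g=1/\pd$ rather than $\pd$ itself. Under that change of variables your $A+B$ is exactly the function the paper also calls $g$ (namely $x\mapsto\frac{1}{x}+\frac{\d\fd^{-1}(x)^2}{\fd^{-1}(x)^2-\d^2}$, since $\eta(2-\eta)=(\tau^2-\d^2)/\tau^2$), your observation $g''=g'-A^2-C$ is the paper's observation that this function has nonpositive derivative, and the quantity $2(g')^2-gg''$ that you expand is $g^3\pd''$, which matches the paper's display $\pd''=-\pd'(1+2\pd g)-\pd^2 g'$ term by term. What you gain is a tidier bookkeeping device: the auxiliary variable $\eta=z-\ln\fd^{-1}(z)=1+\d/\fd^{-1}(z)\in(1,2)$ and the identity $e^{z-\eta}=\fd^{-1}(z)=\d/(\eta-1)$ collapse the chain-rule terms into the single scalar $\d/[\eta(2-\eta)]$, so the positivity of all pieces is visible at a glance; you also make explicit (as the paper leaves slightly implicit) why positivity requires $\fd^{-1}(z)>1$. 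The substance of the argument -- reduce to a sign check on the first-derivative correction term and its derivative -- is the same.
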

\begin{proof}
Since $\phi$ is strictly decreasing, $\Psi$ is finite and positive on $(\g,\infty)$.

Differentiating, we obtain
\begin{IEEEeqnarray}{rl}\label{psid}
\Psi'(x)&=-\biggl(\Psi(x)+e^x\Psi^2(x)[\phi'(x-\ln \fd^{-1}(x))(1-\frac{(\fd^{-1}(x))')}{\fd^{-1}(x)}-\phi'(x)]\biggr)\nonumber\\
&=-\biggl(\Psi(x)+e^x\Psi^2(x)\biggl[\frac{1}{xe^x}+\frac{\d}{\fd^{-1}(x)-\d}\frac{e^{\ln \fd^{-1}(x)-x}}{x-\ln \fd^{-1}(x)}\biggr]\biggr)\nonumber\\
&=-\biggl(\Psi(x)+\Psi^2(x)\biggl[\frac{1}{x}+\frac{\d}{\fd^{-1}(x)-\d}\frac{\fd^{-1}(x)}{x-\ln \fd^{-1}(x)}\biggr]\biggr)\nonumber\\
&=-\biggl(\Psi(x)+\Psi^2(x)\biggl[\frac{1}{x}+\frac{\d\fd^{-1}(x)^2}{\fd^{-1}(x)^2-\d^2}\biggr]\biggr)\text{ (since }x-\ln\fd^{-1}(x)=1+\frac{\d}{\fd^{-1}(x)})\\
&<0\text{ since $\Psi>0$ and $\fd^{-1}\geq\G\ge\d$}\nonumber
\end{IEEEeqnarray}
Observe that the function $g:x\mapsto\frac{1}{x}+\frac{\d\fd^{-1}(x)^2}{\fd^{-1}(x)^2-\d^2}$ appearing in (\ref{psid}) is positive and decreasing on $[\g,\infty)$ since $\fd^{-1}$ is increasing and greater than $\d$ on $[\g,\infty)$.

Finally, from (\ref{psid}), 
\begin{equation}\label{g}
\psi'(x)=-\Psi(x)-\Psi^2(x)g(x)
\end{equation}
 and so
$$
\psi''(x)=-\Psi'(x)(1+2\Psi(x)g(x))-\Psi^2(x)g'(x)>0\text{ since $\Psi,g>0$, $\Psi'<0$ and $g'\le 0$},
$$
establishing that $\Psi$ is strictly convex. 
\end{proof}

We denote the convex conjugate of $\Psi$ by $\Psi^*_\d$,or just $\Psi^*$, so that
$$
\Psi^*:x\mapsto \inf_{z\geq \g}\biggl(zx+\Psi(z)\biggr).
$$

We have established the following:
\begin{lem}
Define 
\[ r :=  \frac{ab}{\B^2 (i)}  \]
Then, $V(i)$ is given by
\begin{align*}
V(i) :&= \kappa + bc+\B^2(i)\Psi^*( r).
\end{align*}
\end{lem}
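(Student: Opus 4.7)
The plan is a direct algebraic rewriting of the formula obtained in the preceding lemma, so no new analytic input is needed. I start from
\[
V(i) = \kappa + \tilde s(i)\,m_f(\ell) + \inf_{z\geq \gamma}\Bigl\{abz + \frac{\beta^2(i)\,e^{-z}}{\phi(z-\ln f_\delta^{-1}(z))-\phi(z)}\Bigr\},
\]
which is the expression established in the previous lemma after the change of variable $z = f_\delta(t/b)$. (I read the stray ``$+ab$'' in the previous display as a typographical artefact of the change of variable and absorb it into the infimum term $abz$, since $z\geq\gamma\geq 1$.)

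The first step is to rewrite the two $(b,c)$-dependent deterministic pieces using the defining parameters: by the definitions in \eqref{param}, $b=\tilde s(i)$ and $c=m_f(\ell)$, so $\tilde s(i)\,m_f(\ell) = bc$. The second step is to recognise the kernel inside the infimum as exactly $\Psi_\delta$, as defined in \eqref{D:Psi}; that is,
\[
\Psi_\delta(z) = \frac{e^{-z}}{\phi(z-\ln f_\delta^{-1}(z))-\phi(z)},
\]
so the infimum becomes $\inf_{z\geq\gamma}\{abz + \beta^2(i)\Psi_\delta(z)\}$.

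The third step is to factor the positive constant $\beta^2(i)$ (which by Assumption \ref{cond} is finite) out of the infimum:
\[
\inf_{z\geq\gamma}\bigl\{abz + \beta^2(i)\Psi_\delta(z)\bigr\} \;=\; \beta^2(i)\,\inf_{z\geq\gamma}\Bigl\{\tfrac{ab}{\beta^2(i)}\,z + \Psi_\delta(z)\Bigr\} \;=\; \beta^2(i)\,\inf_{z\geq\gamma}\bigl\{rz + \Psi_\delta(z)\bigr\},
\]
using $r = ab/\beta^2(i)$. By the definition of the convex conjugate $\Psi^*_\delta(x) = \inf_{z\geq\gamma}\{zx+\Psi_\delta(z)\}$ given immediately before the lemma, this equals $\beta^2(i)\Psi^*_\delta(r) = \beta^2(i)\Psi^*(r)$, which yields the claimed identity $V(i)=\kappa+bc+\beta^2(i)\Psi^*(r)$.

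There is no real obstacle here: the content of the lemma is merely a repackaging of the previous optimisation into the language of convex conjugates, and all the work (the variational calculation, the change of variables, and the convexity of $\Psi$) has already been done in the preceding lemmas. The value of this repackaging lies in the fact that $\Psi^*$ is concave as the infimum of affine functions of $r$, which will be the useful fact for the verification argument in Section 4; I would simply note this concavity as a remark at the end of the proof.
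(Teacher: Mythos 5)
Your proof is correct and follows exactly the route the paper takes (the paper simply states the lemma with ``We have established the following'' because, as you observe, it is a direct repackaging of the preceding display via the definition of $\Psi_\delta$ and the convex conjugate). You are also right that the stray ``$+ab$'' in the paper's display after the change of variable is a typo: since $z=p_\delta(t/b)=1+\tfrac{k}{t}+\ln\tfrac{t}{b}$, the term $ab\bigl(1+\tfrac{k}{t}+\ln\tfrac{t}{b}\bigr)$ is precisely $abz$ with the ``$1$'' already included, so no leftover $ab$ should appear outside the infimum (your parenthetical justification ``since $z\ge\gamma\ge 1$'' is slightly beside the point, and indeed $\gamma_\delta=\min(1+\delta,2+\ln\delta)$ can be less than $1$, but this does not affect the argument).
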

\section{The dynamic solution}
Adopting the usual dynamic approach, we want to give the (conditional) future optimal payoff when  $\rinf_t=i$ and $X_t=x$ (with $x\geq i$) (and the process has not yet reached level 1). 

Since we cannot control the process until it reaches $\rinf_t$ again, the payoff is given by 
\begin{align*}
V(s,i,x) = V(i,x)=\mathbf{E}_{s,x} \int_0^{\T_1 \wedge \T_i} f(X_u)\diff u + \frac{\tilde{s} (x)}{\tilde{s} (i)} V(i) + \left (1 - \frac{\tilde{s} (x)}{\tilde{s} (i)} \right) \tV(x),
\end{align*}
where
\begin{align*}
\mathbf{E}_x \int_0^{\T_1 \wedge \T_i} f(X_u)\diff u &= -\int_x^1 \diff u s'(u) \tilde{m}_f (u) + \frac{\tilde{s} (x)}{\tilde{s} (i)} \int_i^1\diff u s'(u)\tilde{m}_f (u)\\
\text{and }\tV(x) &= \mathbb{E}_{s,1}\int_0^{\T_i} f(X_u) \diff u + \mathbb{E}_l^{*l} \int_0^{\T_0} f(X_u)\diff u.
\end{align*}
Observe that $\kappa = \mathbb{E}_l^{*l} \int_0^{\T_0} f(X_u)\diff u$.

Equivalently, we can rewrite this as
\begin{align*}
V(i,x) &= \mathbf{E}_{s,x} \int_0^{\T_i } f(X_u)\diff u + \frac{\tilde{s} (x)}{\tilde{s} (i)} V(i) + \left (1 - \frac{\tilde{s} (x)}{\tilde{s} (i)} \right) \kappa \\
&=  \int_i^x \diff u s'(u) \tilde{m}_f (u)+ \frac{\tilde{s} (x)}{\tilde{s} (i)} V(i) + \left (1 - \frac{\tilde{s} (x)}{\tilde{s} (i)} \right) \kappa\\
&=  \int_i^x \diff u s'(u) \tilde{m}_f (u)+ \frac{\tilde{s} (x)}{\tilde{s} (i)} ( V(i) - \kappa) + \kappa
\end{align*}

As is usual, to show that $V$ really is the optimal payoff, we consider the 
processes $S^s$ corresponding to using the policy $s$ until time $t$ and then behaving optimally.
What is non-standard here is the \lq phase change' at time $\T_1$ --- after $\T_1$, $\s$ just looks like $\T_0$.

Consequently, we enlarge the state by including a flag process $F$: 
$$
F_t=0\text{ on }[0,\T_1)\text{ and }F_t=1\text { on }[\T_1,\s],
$$
so that the generic state becomes $(s,X_t,\rinf_t\vee \ell,F_t)$ (or more precisely $(s|_{C_t},C_t,X_t,F_t)$ where $C_t=[0.\ell)\cup(\rinf_t,1]$),
and define
\begin{equation}\label{bell}
S^s_t=\int_0^{\s\wedge t} f(X_u)du +V(s,C_t,X_{\s\wedge t})1_{(t<\T_1)}+\tV(s,C_{{\T_1}}X_{\s\wedge t})1_{(t\ge \T_1)},
\end{equation}
where $\tV$ is our proposed \lq post-$\T_1$' payoff given by
\begin{equation}\label{bell2}
\tV(s,C,x)=\tV(x)\defto\begin{cases}
\int_i^x s'(v)\tmf(v)dv+\kappa&\text{ for }x\ge i\\
\kappa&\text{ for }x\in (\ell,i]\\
\int_0^x s'(v)m_f(v,\ell)dv&\text{ for }x\in[0,\ell].
\end{cases}
\end{equation}

\begin{thm}\label{sub}
Suppose that $(S^s_{t\wedge\s})_{t\geq 0}$ is a $\P_{s,i}$-submartingale for any admissible policy $s$ and 
\begin{itemize}
\item[1.] $\E_{s,x,i,0} [S^{\hs}_{\T_1}]=V(s,x,i)$  for some admissible policy $\hs$ with $\hs\in\A^{i,s_0}_{C}$;
\item[2.] $\liminf \E_{\hs^n,x,i,1} [S^{\hs^n}_{\s}]=\tV(s,C,x)$,
 for some sequence of admissible policies $(\hs^n)_{n\geq 1}$ with $\hs^n\in\A^{i,s_0}_{C}$
\end{itemize}
 then 
$$
\V(s,C,x,f)=\begin{cases}
V(s,C,x):&f=0\\
\tV(s,C,x):&f=1,
\end{cases}
$$ 
and $\hs^*$ is an optimal policy.
\end{thm}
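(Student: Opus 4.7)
The plan is a verification argument tailored to the two-phase structure of $\s$, treating the flag $f=0$ (pre-$\T_1$) and $f=1$ (post-$\T_1$) cases in parallel. The key preliminary observation is that $S^s_\s=\int_0^\s f(X_u)\,\diff u$, since on $\{t=\s\}$ the terminal term in (\ref{bell}) reduces to $\tV(s,C_{\T_1},0)$, which vanishes by the third branch of (\ref{bell2}) because $0\in[0,\ell]$. At $t=0$ with flag $0$ we have $S^s_0=V(s,C,x)$, and with flag $1$ we have $S^s_0=\tV(s,C,x)$.

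For the lower bound, fix any admissible $s$ and apply the submartingale hypothesis together with optional stopping: $V(s,C,x)=S^s_0\le \E_{s,x,i,0}[S^s_{\s\wedge t}]$ for every $t\ge 0$. Letting $t\uparrow\infty$ and combining Assumption \ref{cond} with the commute-time bound implicit in (\ref{ctidentity}) to secure uniform integrability gives
\[
V(s,C,x)\le \E_{s,x,i,0}\!\left[\int_0^\s f(X_u)\,\diff u\right].
\]
Taking the infimum over $s\in\A^{i,s_0}_C$ yields $V(s,C,x)\le \V(s,C,x,0)$; the same argument started from flag $1$ yields $\tV(s,C,x)\le\V(s,C,x,1)$.

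For the matching upper bound at $f=0$ I would concatenate policies: use $\hs$ (from condition 1) until $\T_1$ and then switch to $\hs^n$ (from condition 2) on the $\F_{\T_1}$-measurable random subset $(\rinf_{\T_1}\vee\ell,i]$ left uncontrolled at that moment. By the strong Markov property at $\T_1$ applied conditionally on $\rinf_{\T_1}$, condition 2 gives
\[
\E_{s,x,i,0}\!\left[\int_0^\s f(X_u)\,\diff u \text{ under }(\hs,\hs^n)\right]=\E_{s,x,i,0}[S^{\hs}_{\T_1}]+o(1)
\]
as $n\to\infty$, and by condition 1 this equals $V(s,C,x)+o(1)$. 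Hence $\V(s,C,x,0)\le V(s,C,x)$, completing the first case. The $f=1$ case follows from condition 2 alone via the same strong-Markov argument, and $\hs^*$ is identified as the (weak) limit of the concatenated policies.

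The principal obstacle is the discontinuity of $t\mapsto S^s_t$ across $\T_1$: the terminal term switches from $V$ to $\tV$, so the submartingale hypothesis implicitly encodes a jump inequality $\tV(s,C_{\T_1},1)\ge V(s,C_{\T_1},1)$ (in conditional expectation) at $\T_1$, and this compatibility is the real content that must be checked when verifying the submartingale hypothesis itself. Secondary concerns are the integrability required for the $t\uparrow\infty$ passage, which is controlled by Assumption \ref{cond} and the bound $\E_{s,x}[\int_0^\s f(X_u)\,\diff u]\le s[0,1]m_f[0,1]$ from (\ref{ctidentity}), and the admissibility of $(\hs,\hs^n)$, which reduces to verifying that $\hs^n$ lies in $\A^{i,s_0}_{C_{\T_1}}$ when restricted to the random uncontrolled set.
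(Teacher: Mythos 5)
Your proposal is correct and follows essentially the same verification route as the paper: the submartingale hypothesis together with optional stopping and a $t\uparrow\infty$ passage yields the lower bound $\V\ge V$ (resp.\ $\V\ge\tV$), while conditions~1 and~2 supply policies showing this bound is attained.

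Two points are worth flagging as mild divergences. In the $f=0$ case you pass directly to $S^s_\s=\int_0^\s f$, whereas the paper stops the submartingale at $\T_1$, obtains $V\le\E_{s,x}\bigl[\int_0^{\T_1}f+\tV(s,C_{\T_1},1)\bigr]$, and then invokes the already-proved $f=1$ identity to dominate $\tV(s,C_{\T_1},1)$ by $\E_{s}\bigl[\int_{\T_1}^{\s}f\mid\F_{\T_1}\bigr]$; your one-shot passage is cleaner and avoids the nested appeal. For the upper bound at $f=0$ you spell out the concatenation of $\hs$ with $\hs^n$ at $\T_1$ and the conditional use of condition~2; the paper's corresponding step is terse (it writes $\E_{x,i,0}[S^{\hs}_\s]=V$, which as stated is not literally condition~1 and in fact requires both hypotheses combined, exactly as you do). Finally, the ``jump across $\T_1$'' concern you raise is real in general but dissolves here: by construction $V(i,1)=\int_i^1 s'\tmf+\kappa=\tV(1)$ (since $\ts(1)=0$), so the pre- and post-$\T_1$ payoff functions agree at the phase change; this compatibility is indeed part of what must be checked when establishing the submartingale hypothesis, and that verification belongs to the proof of Theorem~\ref{result} rather than to Theorem~\ref{sub} itself.
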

\begin{proof}
Suppose that $s$ is admissible and  $(S^s_{t\wedge\s})_{t\geq 0}$ is a submartingale and $F_0=1$ then
$$
\tV(s,C,x)=S_0\leq \E_{s,x,i,1}[S^s_{t\wedge\s}]=\E_{s,x}[\int_0^{\T_0\wedge t} f(X_u)du +\tV(s,C,X^s_{\\T_0\wedge t})].
$$
Letting $t\uparrow\infty$ we see, by dominated convergence, that
$$
\tV(s,C,x)\leq \E_{s,x}[\int_0^{\T_0} f(X_t)dt].
$$
Minimising over admissible $s$ yields $\V(s,C,x,1)\geq \tV(s,C,x).$
Conversely, $\liminf \E_{\hs^n,x,i,1} [S^{\hs^n}_{\s}]=\tV(s,C,x)$ implies that $\V(s,C,x,1)\leq \tV(s,C,x)$, establishing equality.

Now suppose that $\s$ is admissible and  $(S^s_{t\wedge\s})_{t\geq 0}$ is a submartingale and $F_0=0$ then
$$
V(s,x,i)=S_0\leq \E_{x,i,0}[S^s_{t\wedge\T_1})]=\E_{x,i,0}[\int_0^{\T_1\wedge t} f(X^s_u)du +V(\rinf^s_t,X^s_{\\T_1\wedge t})].
$$
Letting $t\uparrow\infty$ we see that 
$$
\V(s,C,x,i)\leq \E_{x,i,0}[\int_0^\s f(X^s_t)dt].
$$
Since $s$ is arbitrary we deduce $\V(s,C,x,0)\geq \V(s,C,x,i).$
Conversely, $ \E_{x,i,0} [S^{\hs}_{\s}]=V(s,C,x,i)$ implies that $\V(s,C,x,0)\leq V(s,C,x,i)$, establishing equality.
\end{proof}

Now we establish our main result.
\begin{thm}\label{result}
The value function $\V$ is given by
$$
\V(s,C,x,f)=\begin{cases}
V(s,C,x):&f=0\\
\tV(s,C,x):&f=1,
\end{cases}
$$
\end{thm}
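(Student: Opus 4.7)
The plan is to invoke the verification framework of Theorem~\ref{sub}, so it suffices to check three items: that $(S^s_{t\wedge\s})_{t\ge 0}$ is a $\P_{s,i}$-submartingale for every admissible $s$, that some $\hs$ achieves $\E_{\hs,x,i,0}[S^{\hs}_{\T_1}] = V(s,x,i)$, and that some sequence $(\hs^n)$ gives $\liminf_n \E_{\hs^n,x,i,1}[S^{\hs^n}_\s] = \tV(s,C,x)$.

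For the submartingale property I would split $S^s$ at $\T_1$ and apply an It\^o--Tanaka decomposition in each phase. On the smooth interior regions ($X_t\in(\rinf_t,1)$ in phase $0$, $X_t\in(\rinf_{\T_1},1)\cup(0,\ell)$ in phase $1$) the canonical scale dynamics $ds(X_t)=s'(X_t)\sigma(X_t)dB_t$ together with the explicit formulas of Section~3 give $\tfrac{d}{dm}\tfrac{d}{ds}V=-f$, and similarly for $\tV$; this cancels the running cost $\int_0^t f(X_u)du$ and leaves only a local martingale plus boundary corrections. The boundary corrections arise from (i) reflections at $0$ and $1$, where the relevant one-sided scale derivatives of $V$ and $\tV$ vanish, giving zero contribution; (ii) the phase change at $\T_1$, where a direct calculation in the parameters $(\kappa,a,b,c,\t,k)$ shows $V(s,C_{\T_1-},1)=\tV(s,C_{\T_1},1)$, so $S^s$ is continuous across $\T_1$; (iii) the upward kink of $\tV$ at $X=\rinf_{\T_1}$ in phase $1$, which by It\^o--Tanaka contributes a non-negative local-time term since the jump in $\tV'$ there equals $s'(i)\tmf(i)>0$; (iv) the constant stretch $\tV\equiv\kappa$ on $(\ell,\rinf_{\T_1})$ in phase $1$, on which $dS^s_t=f(X_t)dt\ge 0$; and (v) the change-of-infimum term at $X_t=\rinf_t$ in phase $0$.

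For the admissible controls: in phase $0$, take $\hs$ to be the scale function from the Euler--Lagrange equations~\eqref{EL2}--\eqref{EL3} together with the value of $\t$ attaining $\Psi^*(r)$, which produces equality in every boundary condition. In phase $1$, the ideal strategy uses infinite downward drift on $(\ell,\rinf_{\T_1})$ and is not in $\A^{i,s_0}_C$; instead let $\hs^n$ coincide with $s$ off $(\ell,\rinf_{\T_1})$ and have scale density of order $1/n$ there, and apply dominated convergence (justified by Assumption~\ref{cond}) to conclude $\E_{\hs^n,x,i,1}[S^{\hs^n}_\s]\to\tV(s,C,x)$.

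The main obstacle is item (v). At $X_t=\rinf_t$ the running infimum decreases by $dI_t\le 0$ and Tanaka's formula for the running minimum adds a drift $\partial_i V(i,i)\,dI_t$, so the submartingale property requires $\partial_i V(i,i)\le 0$, i.e.\ $V'(i)\le \partial_x V(i,i)$. From the formula $V(i,x)=\int_i^x s'(u)\tmf(u)du+\tfrac{\ts(x)}{\ts(i)}(V(i)-\kappa)+\kappa$ this reduces to the smooth-pasting inequality $V'(i)\le s'(i)\tmf(i)-\tfrac{s'(i)}{\ts(i)}(V(i)-\kappa)$, which must hold for \emph{every} admissible extension of $s$, not merely for the envelope-attaining $\hs$. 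My approach would be: first, by the heuristic of Section~2.3, argue one may restrict competing strategies to those using infinite downward drift on the untouched subinterval after $\T_1$, so that the inner minimisation collapses to the parametric problem solved via $\Psi_\d^*$; second, differentiate the envelope formula $V(i)=\kappa+bc+\B^2(i)\Psi^*(r)$ with respect to $i$ and exploit the first-order condition for $\Psi^*$ (together with the explicit dependence of $\kappa,b,c,r$ on $i$) to extract the required inequality as an algebraic consequence. Once this step is established the remaining bookkeeping is standard verification.
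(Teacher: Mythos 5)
Your architecture matches the paper's proof: invoke Theorem \ref{sub}, decompose $S^s$ via It\^o--Tanaka into a local martingale plus local-time and change-of-infimum corrections, check continuity of the payoff across $\T_1$, and reduce the phase-$0$ submartingale property to the sign condition $\D := \partial_i V(i,x)\big|_{x=i}\le 0$. Two points need attention, however. First, your phase-$1$ approximating sequence goes in the \emph{wrong direction}: to mimic infinite downward drift on $(\ell,\rinf_{\T_1})$ you need the scale derivative there to blow up, not to be of order $1/n$. With $s'\sim 1/n$ the speed density $m'=2/(\sigma^2 s')$ diverges and the process lingers on $(\ell,\rinf_{\T_1})$, so $\E\int f\,\diff t$ blows up instead of approaching $\kappa$. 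The correct choice is large $s'$ (strong downward drift, small speed measure); the paper takes $(\hs^n)'(x)=s'(\ell)e^{2n(x-\ell)}$ on $[\ell,i)$. Second, your item (v) is the analytical heart of the result and cannot be waved through as ``an algebraic consequence.'' After differentiating $V(i)=\kappa+bc+\B^2(i)\ps(r)$ and invoking the conjugate calculus (${\ps}'(x)={\Psi}'^{-1}(-x)$ and $\partial_\d\psd(r)=(\partial_\d\pd)(R)$ with $R={\ps}'(r)$, plus $\d'(i)=\d\, s'(i)/\ts(i)$), the expression must be reduced to
\begin{equation*}
\D=(\B^2)'(i)\,\Psi(R)-\frac{s'(i)}{\ts(i)}\,\B^2(i)\,\frac{\Psi^2(R)}{R}-\rho(i)\,R\,\frac{\ts(i)}{s'(i)},
\end{equation*}
and the uniform nonpositivity of $\D$ over every admissible choice of $s'(i)$ --- together with the identification of the optimal $\hs$ via equality --- comes from the elementary bound $ax+b/x\ge 2\sqrt{ab}$ combined with $\B'(i)=\sqrt{\rho(i)}$. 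Without this explicit last step the verification is not complete.
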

\begin{proof}We prove that $V$ and $\tV$ satisfy the conditions of Theorem \ref{sub}.
\begin{itemize}
\item[1.] Consider $S^s_t$.
Using the fact that
\eqn
N_t\defto\int_0^{t\wedge T_1}f(X_u)\diff u+\phi^s(X_{t\wedge T_1},1)=\E_{s,i}[\int_0^{T_1}f(X_u)\diff u|\F_{t\wedge T_1}]
\endeqn
is a $\P_{s,i}$-martingale,
we see that $S^s_{\cdot}$ is a $\P_{s,i}$-martingale on the stochastic interval $[[t,\T_{\rinf_t}]]$ if $F_t=0$, and hence
$$
dS_t=dN_t+V_i(\rinf_t,X_t)d\rinf_t,
$$
where $N$ is a local martingale.

So, to establish that $S$ is a submartingale on $[0,\T_1]$, since $\rinf$ is a decreasing process {\em which only decreases when $\rinf_t=X_t$}, it is sufficient to show that

$$
V^s_i(i,i)\le 0\text{ and that there exists $\hat s$ such that }V^{\hat s}_i(i,i)= 0.
$$

Differentiating with respect to $i$ yields
\begin{align*}
\frac{\partial}{\partial i}V(i,x) 
&=  - s'(i) \tilde{m}_f (i)+ \frac{\tilde{s} (x)}{\tilde{s} (i)} V'(i) + (V(i)-\kappa) \tilde{s} (x) \frac{s'(i)}{(\tilde{s} (i) )^2} \end{align*}
Setting $x=i$ implies
\begin{equation}\label{delta}
\D:=\frac{\partial}{\partial i}V(i,x)\Big |_{x=i} 
=  - s'(i) \tilde{m}_f (i)+  V'(i) + (V(i)-\kappa)  \frac{s'(i)}{\tilde{s} (i) } 
\end{equation}

Now $V^s(i)= \kappa + bc+\B^2(i)\Psi^*( r)=\kappa+\mf(\ell)\ts(i)+\beta^2(i)\Psi^*_\d(r)$ with $\d=\d(i)=\frac{s(\ell)}{\ts(i)}$ and $r=\frac{\tmf(i)\ts(i)}{\beta^2(i)}$, consequently
\begin{equation}\label{delta2}
{V^s}'(i)=-\mf(\ell)s'(i)+{\B^2}'\psd(r)+\B^2[{\psd}'(r)r'(i)+\d'(i)\frac{\partial}{\partial \d}\psd(r)]
\end{equation}

Observe that $\d'(i)=\frac{s(\ell)s'(i)}{\ts^2(i)}=\d\frac{s'(i)}{\ts(i)}$ while $r'(i)=-\frac{{\B^2}'}{\B^2}r-\frac{\ts(i)\mf'(i)+\tmf(i)s'(i)}{\B^2}$.

As is standard, ${\ps}'(x)={\Psi}'^{-1}(-x)$ and $\ps(x)=x{\Psi}'^{-1}(-x)+\Psi({\Psi}'^{-1}(-x))$ so, 
\begin{IEEEeqnarray*}{rl}
\frac{\partial}{\partial \d}\psd(r)&=\frac{\partial}{\partial \d}[r{\pd}'^{-1}(-r)+\pd({\pd}'^{-1}(-r))]\\
&=r\frac{\partial}{\partial \d}{\pd}'^{-1}(-r)+(\frac{\partial}{\partial \d}\pd)(({\pd}'^{-1}(-r)))+{\pd}'({\pd}'^{-1}(-r))\frac{\partial}{\partial \d}{\pd}'^{-1}\\
&=(\frac{\partial}{\partial \d}\pd)(({\pd}'^{-1}(-r))).
\end{IEEEeqnarray*}

Setting $R_\d=R={\ps}'(r)$, we see that
\[\frac{\partial}{\partial \d}\psd(r)=(\frac{\partial}{\partial \d}\pd)(R)=\frac{\Psi^2(R)\fd^{-1}(R)^2}{\fd^{-1}(R)^2-\d^2}=\Psi^2(R)(g(R)-\frac{1}{R}).
\]
Substituting in (\ref{delta}) and applying (\ref{delta2}) yields
\begin{IEEEeqnarray}{rl}\label{delta3}
\D&={\B^2}'[\psd(r)-rR]+\B^2\d'(i)\frac{\partial}{\partial \d}\psd(r)]-\frac{s'(i)}{\ts(i)}[\tmf(i)\ts(i)-\B^2(i)\ps(r)+\tmf(i)\ts(i)R]-\mf'(i)\ts(i)R\nonumber\\
&={\B^2}'[\psd(r)-rR]+\B^2\d'(i)\frac{\partial}{\partial \d}\psd(r)]-\frac{s'(i)}{\ts(i)}\B^2(i)[r(R+1)-\ps(r)]-\mf'(i)\ts(i)R\nonumber\\
&={\B^2}'[\psd(r)-rR]-\frac{s'(i)}{\ts(i)}\B^2(i)[r(R+1)-\ps(r)-\Psi^2(R)(g(R)-\frac{1}{R})]-\rho R\frac{\ts(i)}{s'(i)}.
\end{IEEEeqnarray}

Now 
$$
\ps(r)=rR+\Psi(R)\Rightarrow r(R+1)-\ps(r)=r-\Psi(R)=-\Psi'(R)-\Psi(R)
=\Psi^2(R)g(R),
$$
and thus
\begin{equation}
\D={\B^2}'\Psi(R)-\frac{s'(i)}{\ts(i)}\B^2(i)\frac{\Psi^2(R)}{R}-\rho R\frac{\ts(i)}{s'(i)}.
\end{equation}
If we now use the fact, noted in \cite{JH}, that
for 
$a,b\geq 0$,
\begin{equation}\label{ineq}
\inf_{x>0}(ax+\frac{b}{x})=2\sqrt{ab}\,\,\text{ and if }\,\,a,b>0\,\,\text{ this is attained at }x=\sqrt{\frac{b}{a}},
\end{equation}
we see that

$$\D\leq {\B^2}'\Psi(R)-2\sqrt{\B^2(i)\rho \Psi^2(R)}=2\B\Psi(R)(\B'(i)-\sqrt{\rho})=0,
$$
as required, with equality attained if 
\begin{equation}\label{opt}
\frac{s'(i)}{R\ts(i)}=\sqrt{\frac{\rho}{\B^2(i)\Psi^2(R)}}\Rightarrow \frac{s'(i)}{\ts(i)}
=\frac{(\ln\B)'(i)}{\Psi(R)}\text{ for }i\in(\ell,i_0).
\end{equation}

\item[2.] Note that on the stochastic interval $[[t\vee \T_1,\inf\{u\geq t:\;X^s\in[\ell, \rinf_{\T_1}]\}]]$ 
$S^s_t$ is a $\P_{s,x}$-martingale,  $N$ say, and is constant on $[[[t\vee \T_1,\inf\{u\geq t:\;X^s\not\in[\ell, \rinf_{\T_1}]\}]]$. It follows that, denoting $V'(x)-V'(x-)$ by $\Delta V'(x)$,
$$
dS^s_t=dN_t+\Delta V'(i)dL^i_t+\Delta V'(\ell)dL^\ell_t,
$$
(recall that $L^a_t$ denotes the local time of $X^s$ at the level $a$ by time $t$).

Now $\Delta V'(\ell)=0$ and $\Delta V'(i)=s^*(i)\tmf(i)>0$ so we conclude that, since $L^\ell$ is an increasing process, $S^s$ is a $\P_{s,x}$-submartingale on $[[\T_1,\s]]$.

Now, defining 
$$
\begin{cases}
(\hs^n)'(x)=s'&\text{ if }x\leq\ell\\
s'(l)\exp\bigl(2n(x-\ell)\bigr)&\text{ if }x\in[\ell,i)\\
s'(l)\exp\bigl(2n(i-\ell)\bigr)\frac{s'(x)}{s'(i)}&\text{ if }x\geq i,
\end{cases}
$$
it is easy to check that
$$
\liminf \E_{s,x,i,1} [S^{\hs^n}_{\s}]=\tV(x),
$$
as required.
\end{itemize}

All that remains is to show that $s$, where $\hs$ is given by (\ref{opt}), is admissible. 

Set $Y=\hs(X)$. The existence up to time $\T_{i_0}\wedge\T_{\ell}$ of a weak solution to 
$$
dY=-s'\circ \ts^{-1}(Y_t)dB_t
$$
follows easily from \cite{E} on observing that $\B$ and $\B'$ are strictly positive on $(\ell,i_0]$ and $\Psi$ is strictly positive on $(\g,\infty)$ and, for $\ell<i<i_0$,
$$
\tilde\hs(i)=\ts(i_0)\exp(\int_\ell^i\frac{\B'(y)}{\B(y)\Psi(R(y))}dy).
$$
We may then extend this to a solution for all $t$ using standard techniques since $s_0$ is, by assumption, admissible.
This implies that $\P_{s,x}$ (as in (\ref{adm}) exists establishing that $s$ is admissible. 
\end{proof}

\section{Concluding remarks and further problems}
We remark first that the optimally controlled process is not a diffusion nor even strong Markov.
To make it into one we  have to adopt an infinite dimensional statespace recording the choice of $s'$ at each level so far visited.

Although we have solved the minimisation problem when $C_0$ is of the form $[0,\ell)\cup(i,1]$ the problem remains open for other forms of constraint sets.
It is tempting to speculate that a similar approach to the one adopted here would, with much further work, yield a result when $C_0$ is of the form
$$
[0,\ell_1)\cup (i_2,\ell_2)\cup\cdots\cup (i_n,1]
$$
and $X_0\geq i_n$ but we have no suggestions as to the form of optimal controls for any other forms of the constraint set. A complete solution would be fascinating.

The corresponding network problem, where $X$ is a reversible Markov chain on a specified graph
is equally interesting and merits further exploration.

\begin{thebibliography}{99}

\bibitem{Barlow}Barlow, M. T. (2017): \lq\lq Random Walks and Heat Kernels on Graphs'',  Cambridge University Press, Cambridge
\bibitem{ARR} {Atchade, Y. and Roberts, G. O. and J. S. Rosenthal}, (2011), {Towards Optimal Scaling of Metropolis-Coupled Markov Chain Monte Carlo}, {\it Statistics and Computing} {21} (4), {555--568},


\bibitem{CRRST} {Chandra, A. K. and  Raghavan, P. and Ruzzo, V. L. and Smolensky, R. and Tiwari, P.} (1989),  {The electrical
resistance of a graph captures its commute and cover times},
 {\it Proceedings of the 21st
ACM Symposium on theory of computing}.
   
\bibitem{E2}  {Englebert, H. J.} (2000),
 {Existence and non-existence of solutions of one-dimensional stochastic equations},
 {\it Probability and Mathematical Statistics} 20, {343--358}.

\bibitem{E} {Englebert, H J and Schmidt, W} (1985), {On solutions of one-dimensional stochastic differential equations without drift}, {\it Z. Wahrsch. Verw. Gebiete} {68}, {287--314}.

\bibitem{EK}	author = {S. N. Ethier and T. G. Kurtz} (1986), \lq\lq{Markov processes}'',  {{Wiley Series in Probability and Mathematical Statistics}}, {John Wiley \& Sons, Inc.},  {New York}.
	
\bibitem{IM} {Ito, K. and McKean, H. P.} (1974),
\lq\lq{Diffusion Processes and their Sample Paths}'', {Springer}, {Berlin-Heidelberg-New York}.


\bibitem{JH} {Saul Jacka and Elena Hern\'andez-Hern\'andez} (2022),  {Minimizing the expected commute time}, {\it Stoch Proc. \& Appl.} {152}, {729--751}.


\bibitem{RY} {Revuz, D and Yor, M} (2004), \lq\lq{Continuous martingales and Brownian motion}'',
 {Springer}, {Berlin-Heidelberg-New York}, 

\bibitem{R+W} {Williams, D} )1979),
 \lq\lq{Diffusions, Markov processes, and martingales: Vol. I}'', {Wiley}, {New York}.




\bibitem{protter:05}{P.~Protter} (2005) \lq\lq Stochastic Integration and Differential Equations'',
{Stochastic Modelling and Applied Probability} {21},{Springer},
{Berlin}.
\end{thebibliography}


\end{document}